 \theoremstyle{plain}
 \newtheorem{theorem}{Theorem}[section]
 \theoremstyle{definition}
 \newtheorem{definition}[theorem]{Definition}
 \theoremstyle{remark}
\title{Shi-type estimates and finite time singularities of flows of G$_2$ structures}
\author{Gao Chen}
\begin{document}
\maketitle
\abstract{In this paper, we extend Lotay-Wei's Shi-type estimate from Laplacian flow to more general flows of G$_2$ structures including the modified Laplacian co-flow. Then we prove a version of $\kappa$-non-collapsing theorem. We will use both of them to study finite time singularities of general flows of G$_2$ structures.}
\section{Introduction}

Let $M$ be a compact 7-manifold. A G$_2$ structure on $M$ is defined by a 3-form $\phi$ such that at each point there exists an element in $GL(7,\mathbb{R})$ which maps $\phi$ into
\begin{equation}e^{123}+e^{145}+e^{167}+e^{246}-e^{257}-e^{347}-e^{356},\end{equation}
where $e^{ijk}=e^i\wedge e^j\wedge e^k$ and $\{e^i\}$ are the standard basis of $T^*M$.
It induces a metric $g$ by
\begin{equation}g(u,v)\mathrm{Vol}_g=\frac{1}{6}(u\lrcorner\phi)\wedge(v\lrcorner\phi)\wedge\phi.\end{equation}
If $\phi$ is closed, it is called a closed G$_2$ structure. If $\psi=*\phi$ is closed, it is called a co-closed G$_2$ structure. For a G$_2$ structure, the torsion tensor $\mathbf{T}$ is defined by
\begin{equation}\nabla_a\phi_{bcd}=\mathbf{T}_a\,^e\psi_{ebcd}.\end{equation}
If the torsion tensor $\mathbf{T}$ vanishes, then it is called a torsion-free G$_2$ structure. The holonomy group of the metric induced by a G$_2$ structure is contained in G$_2$ if and only if it is torsion-free.

In order to get general existence results for the torsion-free G$_2$ structures, many versions of flows have been introduced. For example, Bryant \cite{Bryant} proposed the Laplacian flow of closed $G_2$ structures:
\begin{equation}\frac{\partial}{\partial t}\phi=\Delta_{\phi}\phi.\end{equation}
As an analogy, Karigiannis, McKay and Tsui \cite{KarigiannisMcKayTsui} proposed the Laplacian co-flow:
\begin{equation}\frac{\partial}{\partial t}\psi=\Delta_{\psi}\psi.\end{equation}
However, it is not parabolic. So Grigorian \cite{Grigorian} proposed a modified version:
\begin{equation}\frac{\partial}{\partial t}\psi=\Delta_{\psi}\psi+2\mathrm{d}((A-\mathrm{Tr}\mathbf{T})\phi),\end{equation}
where $A$ is a suitable constant.

There may be other important flows of G$_2$ structures. In general, they should satisfy the equation
\begin{equation}\frac{\partial}{\partial t}\phi_{ijk}=\frac{1}{2}h^l_i\phi_{ljk}\mathrm{d}x^i\wedge\mathrm{d}x^j\wedge\mathrm{d}x^k+\frac{1}{6}
X^l\psi_{lijk}\mathrm{d}x^i\wedge\mathrm{d}x^j\wedge\mathrm{d}x^k,
\label{Definition-1}\end{equation}
where $X$ is a vector field and $h$ is a symmetric tensor.
According to Karigiannis \cite{Karigiannis}, the equivalent equation for $\psi$ is
\begin{equation}\begin{split}\frac{\partial}{\partial t}\psi_{ijkl}=h^m_i\psi_{mjkl}+h^m_j\psi_{imkl}+h^m_k\psi_{ijml}+h^m_l\psi_{ijkm}\\-X_i\phi_{jkl} +X_j\phi_{ikl}-X_k\phi_{ijl}+X_l\phi_{ijk}.\end{split}\end{equation}

The induced equations for the metric and torsion tensor are \cite{Karigiannis}
\begin{equation}\frac{\partial}{\partial t}g_{ij}=2h_{ij},\end{equation}
and
\begin{equation}\frac{\partial}{\partial t}\mathbf{T}_{ij}=\mathbf{T}_{ij}h^{j}_m+\mathbf{T}_{ij}X^l\phi_l\,^{j}\,_m+(\nabla^k h^l_i)\phi_{klj}+\nabla_iX_j.\end{equation}

In this paper, in order to make sure the general flows make sense, we require that
\begin{equation}\frac{\partial}{\partial t}g_{ij}=2h_{ij}=-2R_{ij}+C+L(\mathbf{T})+\mathbf{T}*\mathbf{T},
\label{Definition-2}\end{equation}
\begin{equation}X=C+L(\mathbf{T})+L(\mathrm{Rm})+L(\nabla\mathbf{T})+\mathbf{T}*\mathbf{T},
\label{Definition-3}\end{equation}
and
\begin{equation}\begin{split}\frac{\partial}{\partial t}\mathbf{T}_{ij}=\Delta\mathbf{T}_{ij}+L(\mathbf{T})+L(\nabla\mathbf{T})+\mathrm{Rm}*\mathbf{T}\\
+\nabla\mathbf{T}*\mathbf{T}+\mathbf{T}*\mathbf{T}+\mathbf{T}*\mathbf{T}*\mathbf{T},\end{split}
\label{Definition-4}\end{equation}
where $L$ denote linear maps and * denote multi-linear maps. Note that in this paper, we view $\phi$, $\psi$, $g$ as constants. For example, $\mathbf{T}^{kl}\psi_{ijkl}$ is considered as $L(\mathbf{T})$.
Therefore, we have formulas like \begin{equation}\nabla(L(\mathbf{T}))=L(\nabla\mathbf{T})+\mathbf{T}*\mathbf{T}.\end{equation}
\begin{definition}
In this paper, we call a flow of G$_2$ structures reasonable if it satisfies equations (\ref{Definition-1}),(\ref{Definition-2}),(\ref{Definition-3}),(\ref{Definition-4}), the short time existence and the uniqueness.
\end{definition}
For example, for Laplacian flow \cite{LotayWei}, $X=0$, and
\begin{equation}\frac{\partial}{\partial t}g_{ij}=-2R_{ij}-\frac{2}{3}|\mathbf{T}|^2g_{ij}-4\mathbf{T}_{i}\,^k\mathbf{T}_{kj}.\end{equation}
The condition for the torsion is also satisfied.

For the modified Laplacian co-flow \cite{Grigorian}, $X=\nabla\mathrm{Tr}\mathbf{T}$, and
\begin{equation}\frac{\partial}{\partial t}g_{ij}=-2R_{ij}+\mathbf{T}^{km}\mathbf{T}^{ln}\phi_{ikl}\phi_{jmn}+(4A-2\mathrm{Tr}\mathbf{T})\mathbf{T}_{ij}.\end{equation}
The condition for the torsion is also satisfied.

The short time existence and uniqueness of the Laplacian flow were proved by Bryant-Xu \cite{BryantXu}. The analogous results for the modified Laplacian co-flow were proved by Grigorian \cite{Grigorian}.

In the case of Laplacian flow, Lotay and Wei \cite{LotayWei} proved a global version of Shi-type estimate with respect to $(|\mathrm{Rm}(p,t)|_{g(t)}^2+|\nabla \mathbf{T}(p,t)|^2_{g(t)})^{\frac{1}{2}}.$ It is equivalent to $(|\mathrm{Rm}(p,t)|_{g(t)}^2+|\mathbf{T}(p,t)|^4_{g(t)}+|\nabla \mathbf{T}(p,t)|^2_{g(t)})^{\frac{1}{2}}$ in that case. The first goal of this paper is to show a local version of Shi-type estimate with respect to $(|\mathrm{Rm}(p,t)|_{g(t)}^2+|\mathbf{T}(p,t)|^4_{g(t)}+|\nabla \mathbf{T}(p,t)|^2_{g(t)})^{\frac{1}{2}}$ for all reasonable flows of G$_2$ structures including both the Laplacian flow and the modified Laplacian co-flow. Using the global Shi-type estimate, Lotay-Wei proved that
\begin{equation}\sup_{p\in M}(|\mathrm{Rm}(p,t)|_{g(t)}^2+|\mathbf{T}(p,t)|^4_{g(t)}+|\nabla \mathbf{T}(p,t)|^2_{g(t)})^{\frac{1}{2}}\ge \frac{C}{T-t},\label{Riemannian-blow-up}\end{equation} if $T$ is the maximal existence time for the Laplacian flow. For a reasonable flow of G$_2$ structures, using our Shi-type estimate, (\ref{Riemannian-blow-up}) is also true.

One may ask whether there are any estimates for the Ricci curvature, scalar curvature and torsion torsion at maximal existence time. The answer is yes. Using the Shi-type estimate and the method of Lotay and Wei, it is easy to see that
\begin{equation}\int_0^T\sup_M(|\mathrm{Ric}|+|\mathbf{T}|^2)\mathrm{d}t=\infty.\end{equation}
In order to get better estimates using the method of Wang in \cite{Wang}, we need a $\kappa$-non-collapsing theorem. We will show that the $\kappa$-non-collapsing theorem is true if
\begin{equation}\int_{0}^{T}(T-t)\sup_M |\mathbf{T}|^4\mathrm{d}t<\infty.\end{equation}
In that case, we will prove that
\begin{equation}\limsup_{t\rightarrow T}[(T-t)\sup_M(|\mathrm{Ric}|+|\mathbf{T}|^2)]>0,\end{equation}
and \begin{equation}\limsup_{t_0\rightarrow T}[(T-t_0)^2\sup_{t\le t_0}(1+|R|+|\mathbf{T}|^2)\sup_{t\le t_0}(|\mathrm{Rm}|+|\mathbf{T}|^2+|\nabla\mathbf{T}|)]>0.\end{equation}

In particular, if in addition,
\begin{equation}\sup_M(|R|+|\mathbf{T}|^2)=o(\frac{1}{T-t}),\end{equation}
then the singularity can not be type-I. In other words, \begin{equation}\sup_M(|\mathrm{Rm}|+|\mathbf{T}|^2+|\nabla\mathbf{T}|)=O(\frac{1}{(T-t)})\end{equation}
can not be true. Moreover, using our $\kappa$-non-collapsing theorem, we can also show that any blow-up limit near finite-time singularity must be a manifold with holonomy contained in G$_2$ and has maximal volume growth rate.

In Section 2, we prove the Shi-type estimate. In Section 3, we derive the evolution equation for Perelman's $\mathcal{W}$-functional. In Section 4, we prove the $\kappa$-non-collapsing theorem. In Section 5 we discuss the finite time singularity.

\section{Shi-type estimate}
\begin{theorem}
Let $B_{r}(p)$ be the ball of radius $r$ with respect to $g(0)$ for a reasonable flow of G$_2$ structures. Assume the coefficients in the equations (\ref{Definition-1}),(\ref{Definition-2}),(\ref{Definition-3}),(\ref{Definition-4}) are bounded by $\Lambda$. For example, in the modified Laplacian co-flow case, we assume $|A|\le\Lambda$.
If \begin{equation}|\mathrm{Rm}|+|\mathbf{T}|^2+|\nabla\mathbf{T}|<\Lambda\end{equation} on $B_{r}(p)\times [0,T]$, then \begin{equation}|\nabla^k Rm|+|\nabla^{k+1}\mathbf{T}|<C(k,r,\Lambda,T)\end{equation} on $B_{r/2}(p)\times [T/2,T]$ for all $k=1,2,3,...$
\end{theorem}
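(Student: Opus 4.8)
The plan is to adapt Shi's maximum-principle bootstrap, in the localized form obtained by inserting a spatial cutoff, following Lotay--Wei \cite{LotayWei} for the Laplacian flow. The argument is an induction on $k$, the case $k=0$ being exactly the hypothesis, which says that $G_0:=|\mathrm{Rm}|^2+|\nabla\mathbf{T}|^2\le 2\Lambda^2$ and $|\mathbf{T}|\le\sqrt{\Lambda}$ on $B_r(p)\times[0,T]$. At each level one must control curvature and torsion derivatives \emph{together}: since $h=\tfrac12\partial_t g=-\mathrm{Ric}+C+L(\mathbf{T})+\mathbf{T}*\mathbf{T}$ is not purely Ricci, the evolution of $\nabla^m\mathrm{Rm}$ acquires, through $\nabla^2 h$, a term of shape $\mathbf{T}*\nabla^{m+2}\mathbf{T}$, so $\nabla^m\mathrm{Rm}$ and $\nabla^{m+1}\mathbf{T}$ are coupled at top order and only $G_m:=|\nabla^m\mathrm{Rm}|^2+|\nabla^{m+1}\mathbf{T}|^2$ satisfies a good inequality. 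For the localization, note first that (\ref{Definition-2}) and the hypothesis give $|h|\le C(\Lambda)$ on $B_r(p)\times[0,T]$, so the metrics $g(t)$ are uniformly equivalent to $g(0)$ there, and $g(t)$-balls and $g(t)$-covariant derivatives are comparable to those of $g(0)$ with constants depending only on $\Lambda,T$. Fixing a target $k$, choose radii $r=\rho_0>\rho_1>\dots>\rho_k=r/2$ with $\rho_{m-1}-\rho_m=r/(2k)$ and cutoffs $\eta_m\in C_c^\infty(B_{\rho_{m-1}}(p))$, $\eta_m\equiv1$ on $B_{\rho_m}(p)$, $0\le\eta_m\le1$, with $|\nabla\eta_m|^2\eta_m^{-1}+|\nabla^2\eta_m|\le C(k/r)^2$ in every $g(t)$.

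\textbf{Evolution equations.} Using $\partial_t\Gamma\sim g^{-1}*\nabla h$, the contracted second Bianchi identity (turning the $\nabla^2\mathrm{Ric}$ hidden in $\nabla^2 h$ into $\Delta\mathrm{Rm}+\mathrm{Rm}*\mathrm{Rm}$), the identities $\nabla\phi=\mathbf{T}*\psi$ and $\nabla\psi=\mathbf{T}*\phi$, the prescribed equations (\ref{Definition-2}),(\ref{Definition-4}) together with the formulas of \cite{Karigiannis}, and the rule $\nabla(L(\mathbf{T}))=L(\nabla\mathbf{T})+\mathbf{T}*\mathbf{T}$, one shows by induction on $m$ that $\nabla^m\mathrm{Rm}$ and $\nabla^{m+1}\mathbf{T}$ satisfy heat-type equations whose reaction terms are finite sums of contractions, against $g,\phi,\psi$, of the $\nabla^i\mathrm{Rm}$ and $\nabla^j\mathbf{T}$, in which: every term in the $\mathrm{Rm}$-equation carries at most $m$ derivatives of $\mathrm{Rm}$ and at most $m+1$ of $\mathbf{T}$, \emph{except} terms of shapes $\nabla^{m+1}\mathrm{Rm}*\mathbf{T}$, $\mathbf{T}*\nabla^{m+2}\mathbf{T}$ and $\nabla\mathbf{T}*\nabla^{m+1}\mathbf{T}$, each linear in its highest factor; and every term in the $\mathbf{T}$-equation carries at most $m$ derivatives of $\mathrm{Rm}$ and $m+1$ of $\mathbf{T}$, except $\nabla^{m+1}\mathrm{Rm}*\mathbf{T}$, linear in $\nabla^{m+1}\mathrm{Rm}$. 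Pairing, using $|\mathbf{T}|\le\sqrt\Lambda$, the two Bochner terms $-2(|\nabla^{m+1}\mathrm{Rm}|^2+|\nabla^{m+2}\mathbf{T}|^2)=-2G_{m+1}$, Cauchy--Schwarz with the elementary bounds $|\nabla^{m+1}\mathrm{Rm}|^2,|\nabla^{m+2}\mathbf{T}|^2\le G_{m+1}$ and $|\nabla^{m+1}\mathbf{T}|^2\le G_m$, and the inductive bound $G_0,\dots,G_{m-1}\le Ct^{-(m-1)}$ on $B_{\rho_{m-1}}(p)\times(0,T]$, yields there
\begin{equation*}
(\partial_t-\Delta)G_m\le-2G_{m+1}+C(k,\Lambda)\,(t^{-m}+G_m).
\end{equation*}

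\textbf{Barrier and bootstrap.} Put $\Phi_m:=t^m\eta_mG_m+\sum_{j=0}^{m-1}\beta_j t^jG_j$, with constants $\beta_0\ge\beta_1\ge\dots\ge\beta_{m-1}>0$ to be chosen successively from $j=m-1$ down to $j=0$. Differentiating and using the inequalities for $G_0,\dots,G_m$, the bound $|\nabla G_m|\le C\sqrt{G_mG_{m+1}}$, the cutoff estimates, Young's inequality (split $-2t^m\langle\nabla\eta_m,\nabla G_m\rangle$, one copy of the Bochner term $t^m\eta_mG_{m+1}$ absorbing the resulting $t^m\eta_mG_{m+1}$), and $t^jG_j\le C_j$ for $j<m$ (inductive bound) to dispose of the lower-order reaction terms, one gets on $B_{\rho_{m-1}}(p)\times(0,T]$ an inequality of the form
\begin{equation*}
(\partial_t-\Delta)\Phi_m\le -t^m\eta_mG_{m+1}+(m+C(k,r,\Lambda,T))\,t^{m-1}G_m+\sum_{j=1}^{m-1}\beta_j\,j\,t^{j-1}G_j-2\sum_{j=0}^{m-1}\beta_j t^jG_{j+1}+C.
\end{equation*}
Choosing $\beta_{m-1}$ so that $2\beta_{m-1}t^{m-1}G_m$ absorbs the $t^{m-1}G_m$ term, then $\beta_{m-2}$ so that $2\beta_{m-2}t^{m-2}G_{m-1}$ absorbs $\beta_{m-1}(m-1)t^{m-2}G_{m-1}$, and so on down to $2\beta_0G_1$ absorbing $\beta_1G_1$, all the bad terms cancel and $(\partial_t-\Delta)\Phi_m\le C(k,r,\Lambda,T)$. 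Since $\Phi_m$ is continuous up to $t=0$ with $\Phi_m(\cdot,0)=\beta_0G_0\le2\beta_0\Lambda^2$, and $\Phi_m$ restricted to $\partial B_{\rho_{m-1}}(p)\times[0,T]$ equals $\sum_{j<m}\beta_j t^jG_j$, which is bounded by the inductive bound, the maximum principle gives $\Phi_m\le C(k,r,\Lambda,T)$ on $B_{\rho_{m-1}}(p)\times[0,T]$. Restricting to $B_{\rho_m}(p)$, where $\eta_m=1$, gives $G_m\le Ct^{-m}$ on $(0,T]$, closing the induction; at $m=k$ and $t\ge T/2$ this is $|\nabla^k\mathrm{Rm}|^2+|\nabla^{k+1}\mathbf{T}|^2\le C(k,r,\Lambda,T)$ on $B_{r/2}(p)\times[T/2,T]$.

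\textbf{Main obstacle.} The analytic core — the cascade of weights and the maximum principle — is routine once the differential inequality for $G_m$ is in hand; the real work is the derivation and organization of that inequality for a \emph{general} reasonable flow. The delicate point is precisely that the flow is not Ricci flow: $h$ (and $X$) carry torsion terms, so $\partial_t\nabla^m\mathrm{Rm}$ naively produces $\mathbf{T}*\nabla^{m+2}\mathbf{T}$, an apparent loss of a derivative beyond the hierarchy. One must verify that this term, and its siblings arising from the non-metric parts of $h$, from $\nabla\phi=\mathbf{T}*\psi$, and from the $[\partial_t,\nabla^m]$ commutators, always occur linearly in $\nabla^{m+2}\mathbf{T}$ contracted with a \emph{bounded} tensor, so that it is dominated by the Bochner term $-2|\nabla^{m+2}\mathbf{T}|^2$ coming from the torsion equation — which is exactly why the coupled quantity $G_m$ is the right object to run the induction on.
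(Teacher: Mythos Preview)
Your bootstrap via the cascaded barrier $\Phi_m=t^m\eta_mG_m+\sum_{j<m}\beta_jt^jG_j$ is a legitimate alternative to the paper's scheme, which instead runs Shi's multiplicative trick: it sets $Q=(\mu+|\mathrm{Rm}|^2+|\mathbf{T}|^4+|\nabla\mathbf{T}|^2)(|\nabla\mathrm{Rm}|^2+|\nabla^2\mathbf{T}|^2)$, obtains the \emph{quadratic} inequality $(\partial_t-\Delta)Q\le -C_1Q^2+C_2$, and compares $Q$ against the explicit supersolution $H=\nu/\phi^2+1/(C_1t)+\sqrt{C_2/C_1}$, $\phi$ a $g(0)$--cutoff. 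Your additive route trades this for a linear inequality and a telescoping choice of $\beta_j$.

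There is, however, a genuine gap in your localization at the base step $m=1$. You assert that ``$g(t)$-covariant derivatives are comparable to those of $g(0)$'' and hence $|\nabla^2\eta_m|\le C(k/r)^2$ in every $g(t)$. Uniform equivalence of the metrics follows from $|h|\le C(\Lambda)$, but comparability of \emph{Hessians} requires a bound on $\Gamma(t)-\Gamma(0)=\int_0^t g^{-1}*\nabla h$, and $\nabla h$ has top order $\nabla\mathrm{Rm}+\nabla^2\mathbf{T}$, which is $G_1^{1/2}$ --- exactly what you are trying to bound at $m=1$. Without this, the term $-t\,G_1\,\Delta_{g(t)}\eta_1$ in $(\partial_t-\Delta)\Phi_1$ is uncontrolled and your maximum-principle step fails. (For $m\ge2$ the inductive bound $G_1\le Ct^{-1}$ makes $|\nabla h|$ integrable in $t$, so the issue is confined to the first step.)

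The paper resolves precisely this circularity by exploiting the quadratic decay: on the set $\{Q\le H\}$ one has $|\partial_t\Gamma|\le C(|\nabla\mathrm{Rm}|+|\nabla^2\mathbf{T}|+1)\le C(\sqrt{\nu}/\phi+t^{-1/2}+1)$, whence after integration $|\Gamma(t)-\tilde\Gamma|\le C(\Lambda,T)(\sqrt{\nu}/\phi+1)$. Combined with Hessian comparison for the $g(0)$--distance this yields $-2\phi\Delta\phi+6|\nabla\phi|^2\le C(\sqrt{\nu}+1)$, and then choosing $\nu$ large makes $H$ a genuine supersolution. The point is that the $\nu/\phi^2$ term in $H$ both caps $G_1$ and feeds back into the estimate for $\Delta\phi$ in a self-consistent way; your linear barrier has no analogous mechanism. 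To repair your argument you would need either to run the multiplicative trick for $k=1$ first, or to build a comparable bootstrap for $\Delta\eta_1$ into your $\Phi_1$.
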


\begin{proof}
We will use the method proposed by Shi in \cite{Shi}. We start from the evolution equations for the Riemannian curvature, the torsion tensor and their higher order derivatives. It is well known \cite{ChowKnopf} that if $\frac{\partial}{\partial t}g_{ij}=2h_{ij}$,
then
\begin{equation}\begin{split}\frac{\partial}{\partial t}R_{ijk}\,^l=\nabla_i\nabla_k h_{jp}+\nabla_j\nabla_p h_{ik}
-\nabla_i\nabla_p h_{jk}-\nabla_j\nabla_k h_{ip}\\
-R_{ijk}\,^q h_{qp}-R_{ijp}\,^q h_{kp},\end{split}\end{equation}
and
\begin{equation}\frac{\partial}{\partial t}R=-2\Delta\mathrm{Tr}h+2\mathrm{div(div}h)-2<h,\mathrm{Ric}>.\end{equation}
Therefore, let the degree of $\mathbf{T}$ and $\nabla$ be 1 and the degree of $\mathrm{Rm}$ be 2, then the degree of $(\frac{\partial}{\partial t}-\Delta)\mathrm{Rm}$ is 4 but it contains no $\nabla^2\mathrm{Rm}$ or $\nabla^3\mathbf{T}$ term. The degree of $(\frac{\partial}{\partial t}-\Delta)\mathbf{T}$ is 3 but it contains no $\nabla\mathrm{Rm}$ or $\nabla^2\mathbf{T}$ term. The term $\frac{\partial}{\partial t}R-\Delta R-2|\mathrm{Ric}|^2$ is a degree 4 polynomial of $\mathrm{Ric}$, $\nabla^2\mathbf{T}$, $\nabla\mathbf{T}$ and $\mathbf{T}$ but contains no $\mathrm{Ric}*\mathrm{Ric}$ term.

On the other hand
\begin{equation}\frac{\partial}{\partial t} \Gamma^{k}_{ij}=g^{kl}(h_{il,j}+h_{jl,i}-h_{ij,l}),\end{equation}
So the degree of $(\frac{\partial}{\partial t}-\Delta)\nabla\mathbf{T}$ is degree 4 but it contains no $\nabla^2\mathrm{Rm}$ or $\nabla^3\mathbf{T}$ term.

Therefore, all the terms $(\frac{\partial}{\partial t}-\Delta)|\mathrm{Rm}|^2+2|\nabla\mathrm{Rm}|^2$, $(\frac{\partial}{\partial t}-\Delta)|\mathbf{T}|^4$ and $(\frac{\partial}{\partial t}-\Delta)|\nabla\mathbf{T}|^2+2|\nabla^2\mathbf{T}|^2$ can be bounded by
\begin{equation}\epsilon(|\nabla\mathrm{Rm}|^2+|\nabla^2\mathbf{T}|^2)
+C_\epsilon(|\mathrm{Rm}|^2+|\mathbf{T}|^4+|\nabla\mathbf{T}|^2+1)^{3/2}.\end{equation}
Choose $\epsilon=1$, then \begin{equation}\begin{split}(\frac{\partial}{\partial t}-\Delta)(|\mathrm{Rm}|^2+|\mathbf{T}|^4+|\nabla\mathbf{T}|^2+1)
\le-(|\nabla\mathrm{Rm}|^2+|\nabla^2\mathbf{T}|^2)\\
+C(|\mathrm{Rm}|^2+|\mathbf{T}|^4+|\nabla\mathbf{T}|^2+1)^{3/2}.\end{split}
\label{Evolution-of-Riemannian}\end{equation}
Similarly, for all $k=1,2,3...$, both the degree of $(\frac{\partial}{\partial t}-\Delta)\nabla^k\mathrm{Rm}$ and the degree of $(\frac{\partial}{\partial t}-\Delta)\nabla^{k+1}\mathbf{T}$ are $k+4$ but they contain no $\nabla^{k+2}\mathrm{Rm}$ or $\nabla^{k+3}\mathbf{T}$ term.
So
\begin{equation}\begin{split}(\frac{\partial}{\partial t}-\Delta)(|\nabla^k\mathrm{Rm}|^2+|\nabla^{k+1}\mathbf{T}|^2)
\le-(|\nabla^{k+1}\mathrm{Rm}|^2+|\nabla^{k+2}\mathbf{T}|^2)\\
+C(k)(\sum_{j=0}^{k}(|\nabla^j\mathrm{Rm}|^{\frac{2(k+3)}{j+2}}
+|\nabla^{j+1}\mathbf{T}|^{\frac{2(k+3)}{j+2}})+|\mathbf{T}|^{2(k+3)}+1).\end{split}\end{equation}

Let $Q=(\mu+|\mathrm{Rm}|^2+|\mathbf{T}|^4+|\nabla\mathbf{T}|^2)(|\nabla\mathrm{Rm}|^2+|\nabla^2\mathbf{T}|^2)$, then
\begin{equation}
\begin{split}
(\frac{\partial}{\partial t}-\Delta)Q =& [(\frac{\partial}{\partial t}-\Delta)(\mu+|\mathrm{Rm}|^2+|\mathbf{T}|^4+|\nabla\mathbf{T}|^2)](|\nabla\mathrm{Rm}|^2+|\nabla^2\mathbf{T}|^2)
\\
&+(\mu+|\mathrm{Rm}|^2+|\mathbf{T}|^4+|\nabla\mathbf{T}|^2)(\frac{\partial}{\partial t}-\Delta)(|\nabla\mathrm{Rm}|^2+|\nabla^2\mathbf{T}|^2)\\
&-[\nabla(\mu+|\mathrm{Rm}|^2+|\mathbf{T}|^4+|\nabla\mathbf{T}|^2)][\nabla(|\nabla\mathrm{Rm}|^2+|\nabla^2\mathbf{T}|^2)]\\
\le&-(|\nabla\mathrm{Rm}|^2+|\nabla^2\mathbf{T}|^2)^2-\mu(|\nabla^2\mathrm{Rm}|^2+|\nabla^3\mathbf{T}|^2)\\
&+C(\Lambda,\mu)(|\nabla\mathrm{Rm}|^2+|\nabla^2\mathbf{T}|^2)^{\frac{4}{3}}+C(\Lambda,\mu)\\
&+C(\Lambda)(|\nabla\mathrm{Rm}|+|\nabla^2\mathbf{T}|+1)^2(|\nabla^2\mathrm{Rm}|+|\nabla^3\mathbf{T}|).
\end{split}
\end{equation}
Choose $\mu=C(\Lambda)$ large enough so that
\begin{equation}\begin{split}C(\Lambda)(|\nabla\mathrm{Rm}|+|\nabla^2\mathbf{T}|+1)^2(|\nabla^2\mathrm{Rm}|+|\nabla^3\mathbf{T}|)\le
 \frac{1}{4}(|\nabla\mathrm{Rm}|^2+|\nabla^2\mathbf{T}|^2)^2\\
 +\mu(|\nabla^2\mathrm{Rm}|^2+|\nabla^3\mathbf{T}|^2)
\end{split},\end{equation}
then
\begin{equation}\begin{split}
(\frac{\partial}{\partial t}-\Delta)Q &\le-\frac{3}{4}(|\nabla\mathrm{Rm}|^2+|\nabla^2\mathbf{T}|^2)^2
+C(\Lambda)(|\nabla\mathrm{Rm}|^2+|\nabla^2\mathbf{T}|^2+1)^{\frac{4}{3}}\\
&\le-\frac{1}{2}(|\nabla\mathrm{Rm}|^2+|\nabla^2\mathbf{T}|^2)^2+C(\Lambda)\\
&\le-C(\Lambda)Q^2+C(\Lambda)\\
&=-C_1(\Lambda)Q^2+C_2(\Lambda).
\end{split}\end{equation}

Let $\phi$ be a cut-off function which is 0 outside $B_{r}$, and is 1 inside $B_{r/2}$.
We are done if we can find $\nu$ such that
\begin{equation}H=\frac{\nu}{\phi^2}+\frac{1}{C_1(\Lambda)t}+\sqrt{\frac{C_2(\Lambda)}{C_1(\Lambda)}}\end{equation} satisfies
\begin{equation}(\frac{\partial}{\partial t}-\Delta)H>-C_1(\Lambda)H^2+C_2(\Lambda)\end{equation}
as long as $Q\le H$.

However

\begin{equation}\frac{\partial}{\partial t} H = -\frac{1}{C_1(\Lambda)t^2}, \end{equation}
\begin{equation}H^2 \ge \frac{\nu^2}{\phi^4}+\frac{1}{C_1(\Lambda)^2t^2}+\frac{C_2(\Lambda)}{C_1(\Lambda)},\end{equation}
and
\begin{equation}\begin{split}\Delta H &= \nu\Delta\frac{1}{\phi^2} \\
&= \nu\nabla(-2\frac{\nabla \phi}{\phi^3})\\
&= \frac{\nu}{\phi^4}(-2\phi\Delta\phi+6|\nabla \phi|^2).\\
\end{split}\end{equation}

So if $C_1(\Lambda)\nu>-2\phi\Delta\phi+6|\nabla\phi|^2$ as long as $Q\le H$, we are done.

Let $\tilde g$ be the metric at time 0, let $\gamma$ be the distance to $p$ with respect to $\tilde g$. Pick a non-increasing cut-off function $\eta$ which is 0 on $[r^2,\infty)$ and is 1 on $[0,r^2/4]$. Let $\phi=\eta(\gamma^2)$. Then for the ordinary derivatives
\begin{equation}\partial_i\phi=2\eta'(\gamma^2)\gamma\partial_i\gamma,\end{equation}
\begin{equation}\partial_i\partial_j\phi=2\eta'(\gamma^2)\gamma\partial_i\partial_j\gamma
+(4\eta''(\gamma^2)\gamma^2+2\eta'(\gamma^2))\partial_i\gamma\partial_j\gamma.\end{equation}
By Hessian comparison theorem,
\begin{equation}\tilde\nabla_{ij}^2\gamma=\partial_i\partial_j\gamma-\tilde\Gamma^p_{ij}\partial_p\gamma\le C(\Lambda)\tilde g_{ij}/\gamma.\end{equation}
So
\begin{equation}\Delta\gamma=g^{ij}(\partial_i\partial_j\gamma-\tilde\Gamma^p_{ij}\partial_p\gamma)\le C(\Lambda)g^{ij}\tilde g_{ij}/\gamma+g^{ij}(\tilde\Gamma^p_{ij}-\Gamma^p_{ij})\partial_p\gamma.\end{equation}

Since $|\frac{\partial}{\partial t}g_{ij}|\le C(\Lambda)$, we see that
$C(\Lambda,T)^{-1}\tilde g_{ij}\le g_{ij}\le C(\Lambda,T)\tilde g_{ij}$.

On the other hand the degree of
\begin{equation}\frac{\partial}{\partial t} \Gamma^{k}_{ij}=g^{kl}(h_{il,j}+h_{jl,i}-h_{ij,l})\end{equation}
is 3, so it is bounded by $C(\Lambda,T)(|\nabla\mathrm{Rm}|+|\nabla^2\mathbf{T}|+1)$.
Using $Q\le H$, we see that
\begin{equation}|\frac{\partial}{\partial t} \Gamma^{k}_{ij}|\le C(\Lambda,T)(\frac{\sqrt{\nu}}{\phi}+\frac{1}{\sqrt{t}}+1).\end{equation}

So \begin{equation}\Delta\gamma\le \frac{C(\Lambda,T)}{\gamma}+C(\Lambda,T)(\frac{\sqrt{\nu}}{\phi}+1),\end{equation}
and \begin{equation}\Delta\phi\ge -C(\Lambda,T,r)(\frac{\sqrt{\nu}}{\phi}+1).\end{equation}

Therefore
\begin{equation}-2\phi\Delta\phi+6|\nabla\phi|^2\le C(\Lambda,T,r)(\sqrt{\nu}+1).\end{equation}
So if we choose $\nu=C(\Lambda,T,r)$ large enough, then
\begin{equation}C(\Lambda,T,r)(\sqrt{\nu}+1)<C_1(\Lambda)\nu\end{equation}
can be achieved. We are done for the bound of $(|\nabla\mathrm{Rm}|^2+|\nabla^2\mathbf{T}|^2)$.
Using \begin{equation}Q_k=(\mu_k+|\nabla^k\mathrm{Rm}|^2+|\nabla^{k+1}\mathbf{T}|^2)(|\nabla^{k+1}\mathrm{Rm}|^2
+|\nabla^{k+2}\mathbf{T}|^2),\end{equation}
we can get higher derivative bounds.
\end{proof}
\section{Perelman's $\mathcal{W}$ functional}

In \cite{Perelman}, Perelman introduced the $\mathcal{W}$ functional \begin{equation}\mathcal{W}(g,f,\tau)=\int_{M}[\tau(R+|\nabla f|^2)+f-n](4\pi\tau)^{-n/2}e^{-f}dg.\end{equation}
By routine calculations \cite{KleinerLott}, if $\delta g_{ij}=v_{ij}$, $\delta f=h$, $v=g^{ij}v_{ij}$, $\delta\tau=\sigma$, then
\begin{equation}
\begin{split}
&\delta\mathcal{W}=\int_{M}[(\frac{v}{2}-h-\frac{n\sigma}{2\tau})(\tau(R+2\Delta f-|\nabla f|^2)+f-n)\\
&\qquad+\sigma(R+|\nabla f|^2)+h-\tau(R_{ij}+f_{ij})v^{ij}](4\pi\tau)^{-n/2}e^{-f}dg.
\end{split}
\end{equation}

For a general geometric flow \begin{equation}\frac{\partial}{\partial t} g_{ij}=-2R_{ij}+E_{ij},\end{equation}

let $f(t,p)$ solve the backwards heat equation:

\begin{equation}
\left\{ \begin{array}{l}
         \frac{\partial}{\partial t} f=-\Delta f -R +\frac{1}{2}g^{ij}E_{ij} + \frac{n}{2\tau} + |\nabla f|^2\\
         \tau=T-t,
         \end{array}\right.
\end{equation}
where $T$ is any given real number.

Let  $\varphi_t$ be the diffeomorphism generated by the time-dependent vector fields $-\nabla f$, define $\tilde g(t)=\varphi_t^* g(t)$,
and $\tilde f(t)= \varphi_t^* f(t)$, then

\begin{equation}
\left\{ \begin{array}{l}
      \frac{\partial}{\partial t} \tilde g_{ij}= -2  \tilde R_{ij} + \tilde E_{ij} - 2 \tilde f_{ij}:= \tilde v_{ij}\\
      \frac{\partial}{\partial t} \tilde f=  - \tilde\Delta \tilde f -\tilde R +\frac{1}{2}\tilde g^{ij}\tilde E_{ij}  + \frac{n}{2\tau} := \tilde h,
      \end{array}\right.
\end{equation}
where the quantities with $\sim$ sign are just the original quantities pulled back under $\varphi_t$.  Since
\begin{equation}\mathcal{W}(g(t), f(t), \tau(t))=\mathcal{W}(\tilde g(t), \tilde f(t), \tau(t)),\end{equation} we could use the variation formula to obtain
\begin{equation}
\begin{split}
& \frac{\mathrm{d}}{\mathrm{d} t}\mathcal{W}(g(t), f(t), \tau(t))\\
& = \frac{\mathrm{d}}{\mathrm{d} t}\mathcal{W}(\tilde g(t),\tilde f(t), \tau(t))\\
& = \int \big\{ - \tau (\tilde R_{ij}+\tilde f_{ij})\tilde v^{ij} + \sigma(\tilde R+|\tilde \nabla \tilde f|^2) + \tilde h   \\
& \qquad + (\frac{\tilde v}{2}-\tilde h-\frac{n\sigma}{2\tau})\big(\tau(\tilde R
+ 2\tilde\Delta \tilde f - |\tilde \nabla \tilde f|^2) + \tilde f-n\big)\big\}(4\pi\tau)^{-\frac{n}{2}}e^{-\tilde f}\mathrm{d} \tilde g\\
& = \int \{ 2\tau (\tilde R_{ij} + \tilde f_{ij})(\tilde R^{ij} +\tilde f^{ij} -\frac{1}{2} \tilde E^{ij}) \\
& \qquad - (\tilde R+|\tilde \nabla\tilde f|^2) - \tilde \Delta \tilde f -\tilde R +\frac{1}{2}\tilde g^{ij}\tilde E_{ij} + \frac{n}{2\tau} \}(4\pi\tau)^{-\frac{n}{2}}e^{-\tilde f}\mathrm{d} \tilde g\\
& =\int \{2\tau |\tilde R_{ij} +\tilde f_{ij}|^2 - 2(\tilde R + \tilde \Delta \tilde f)
 + \frac{n}{2\tau} \\
& \qquad - \tau (\tilde R_{ij} +\tilde f_{ij} -\frac{\tilde g_{ij}}{2\tau}) \tilde E^{ij}\}(4\pi\tau)^{-\frac{n}{2}}e^{-\tilde f}\mathrm{d} \tilde g\\
& =\int \{ 2\tau |\tilde R_{ij} +\tilde f_{ij} -\frac{\tilde g_{ij}}{2\tau}|^2 - \tau (\tilde R_{ij} + \tilde f_{ij} -\frac{\tilde g_{ij}}{2\tau})\tilde E^{ij}\} (4\pi\tau)^{-\frac{n}{2}}e^{-\tilde f}\mathrm{d} \tilde g\\
& = \int \{2\tau |\tilde R_{ij} +\tilde f_{ij} -\frac{\tilde g_{ij}}{2\tau}-\frac{\tilde E_{ij}}{4}|^2 -\frac{\tau}{8} |\tilde E|^2 \}(4\pi\tau)^{-\frac{n}{2}} e^{-\tilde f}\mathrm{d} \tilde g\\
& = \int \{2\tau |R_{ij} + f_{ij} -\frac{g_{ij}}{2\tau}-\frac{E_{ij}}{4}|^2 -\frac{\tau}{8} |E|^2 \}(4\pi\tau)^{-\frac{n}{2}} e^{-f}\mathrm{d} g\\
& \ge -\frac{\tau}{8} (\sup_M |E|)^2\int_{M}(4\pi\tau)^{-n/2}e^{-f}\mathrm{d}g.
\end{split}
\end{equation}

Now we are interested in the infimum
\begin{equation}\mu(g,\tau)=\inf_{\int(4\pi\tau)^{-n/2}e^{-f}\mathrm{d} g=1}\mathcal{W}(g,f,\tau).\end{equation}
Suppose $\tau_1<\tau_2$ and $f$ achieves the infimum at $T-\tau_1$.
Then by solving
\begin{equation}\frac{\partial}{\partial t}f=-\Delta f-R+\frac{1}{2}g^{ij}E_{ij}+\frac{n}{2\tau}+|\nabla f|^2\end{equation}
backwards,
\begin{equation}\int(4\pi\tau)^{-n/2}e^{-f}\mathrm{d} g=1\end{equation} is still true for all $\tau\in[\tau_1,\tau_2]$.
So \begin{equation}\mu(g(T-\tau_2),\tau_2)\le\mu(g(T-\tau_1),\tau_1)+\frac{1}{8}\int_{\tau_1}^{\tau_2}\tau\sup_{t=T-\tau} |E|^2\mathrm{d}\tau.
\label{Quasi-monotonicity}
\end{equation}

\section{$\kappa$-non-collapsing theorem}
The original $\kappa$-non-collapsing theorem of Perelman for Ricci flow in \cite{Perelman} requires the Riemannian curvature bound. However, the definition can be modified to the following version:

\begin{definition} The Riemannian metric $g$ on $M^n$ is said to be $\kappa$-non-collapsing relative to upper bound of scalar curvature on the scale $\rho$ if for any $B_g(p,r)\subset M$ with $r<\rho$ such that $\sup_{B_g(p,r)} R_g \le r^{-2}$, we have $\mathrm{Vol}_g B_g(p,r)\ge \kappa r^n$.
\end{definition}

The $\kappa$-non-collapsing theorem relative to upper bound of scalar curvature for Ricci flow was proved by Perelman (Section 13 of \cite{KleinerLott}).  The proof can be modified to get the following theorem using the quasi-monotonicity formula (\ref{Quasi-monotonicity}) in the previous section:

\begin{theorem}
Let $\frac{\partial}{\partial t} g_{ij}=-2R_{ij}+E_{ij}$ be a geometric flow on a compact manifold $M^n$. Then there exists a positive function $\kappa$ with 4 variables such that if $0<\rho\le\rho_0<\infty$, $0<\frac{T}{2}\le t_0\le T<\infty$ and
\begin{equation}\int_{0}^{t_0}(t_0+\rho^2-t)\sup_M|E|^2\mathrm{d}t<\infty,\end{equation} then $g(t_0)$ is $\kappa(g(0),T,\rho_0,\int_{0}^{t_0}(t_0+\rho^2-t)\sup_M|E|^2\mathrm{d}t)$-non-collapsing relative to upper bound of scalar curvature on scale $\rho$.
\label{main-theorem}
\end{theorem}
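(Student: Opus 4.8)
The plan is to reproduce Perelman's proof of $\kappa$-non-collapsing relative to an upper scalar curvature bound, in the form given in Section 13 of \cite{KleinerLott}, with the single modification that the exact monotonicity of $\mathcal{W}$ along the flow is replaced by the quasi-monotonicity inequality (\ref{Quasi-monotonicity}) of the previous section. Fix $p\in M$ and a radius $0<r<\rho$ such that $\sup_{B_{g(t_0)}(p,r)}R_{g(t_0)}\le r^{-2}$; the whole problem reduces to (i) bounding $\mu(g(t_0),s^2)$ from below for all $s\le r$ by a constant depending only on $g(0)$, $T$, $\rho_0$ and $I_0:=\int_0^{t_0}(t_0+\rho^2-t)\sup_M|E|^2\,\mathrm{d}t$, and (ii) converting such a bound into the volume estimate by Perelman's concentrated test function.

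For step (i), fix $s\le r$ and apply (\ref{Quasi-monotonicity}) with the free reference time chosen to be $t_0+s^2$, with $\tau_1=s^2$ (so that $(t_0+s^2)-\tau_1=t_0$) and $\tau_2=t_0+s^2$ (so that $(t_0+s^2)-\tau_2=0$); this only involves the flow on $[0,t_0]\subseteq[0,T]$. The change of variables $\tau=t_0+s^2-t$ in the error integral gives
\begin{equation}
\mu(g(t_0),s^2)\ \ge\ \mu(g(0),\,t_0+s^2)\ -\ \frac18\int_0^{t_0}(t_0+s^2-t)\sup_M|E|^2\,\mathrm{d}t.
\end{equation}
Since $s\le r<\rho$, the error integral is at most $\tfrac18 I_0$, which is finite by hypothesis. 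For the first term, $T/2\le t_0\le T$ and $0<s\le\rho_0$ force $t_0+s^2$ into the compact interval $[T/2,\,T+\rho_0^2]\subset(0,\infty)$, on which $\tau\mapsto\mu(g(0),\tau)$ is bounded below by some $-C_0(g(0),T,\rho_0)$ (by continuity of $\mu(g(0),\cdot)$ on a compact manifold, or because $\nu(g(0))=\inf_{\tau>0}\mu(g(0),\tau)>-\infty$). Hence $\mu(g(t_0),s^2)\ge-\bar C$ for all $s\le r$, with $\bar C=C_0(g(0),T,\rho_0)+\tfrac18 I_0$.

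For step (ii), rescaling $g(t_0)$ by $r^{-2}$ reduces to the case $r=1$, $\sup_{B(p,1)}R\le1$, $\tau=1$, since both $\mu$ and the ratio $\mathrm{Vol}\,B_{g(t_0)}(p,r)/r^n$ are scale invariant. Plugging into $\mathcal{W}(g(t_0),\cdot,r^2)$ a function $f$ with $e^{-f/2}$ a fixed radial cut-off concentrated in $B_{g(t_0)}(p,r)$, normalized so that $\int(4\pi r^2)^{-n/2}e^{-f}\,\mathrm{d}g=1$, and using $\sup_{B(p,r)}R\le r^{-2}$ to absorb the $\tau R$ term, produces an upper bound of the shape
\begin{equation}
\mu(g(t_0),r^2)\ \le\ C(n)\ +\ \log\frac{\mathrm{Vol}_{g(t_0)}B_{g(t_0)}(p,r)}{r^n},
\end{equation}
once the standard doubling technicality is handled as in \cite{KleinerLott} (if most of the mass of $B(p,r)$ already sits inside $B(p,r/2)$ one shrinks the radius and iterates, which is exactly where the uniform bound $\mu(g(t_0),s^2)\ge-\bar C$ over all $s\le r$ from step (i) is used). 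Combining the two displays yields $\mathrm{Vol}_{g(t_0)}B_{g(t_0)}(p,r)\ge r^n\exp(-\bar C-C(n))$, so the theorem holds with
\begin{equation}
\kappa\big(g(0),T,\rho_0,I_0\big)=\exp\!\big(-C_0(g(0),T,\rho_0)-\tfrac18 I_0-C(n)\big)>0.
\end{equation}

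I do not expect a serious conceptual obstacle: all of the G$_2$-specific analysis has already been absorbed into (\ref{Quasi-monotonicity}), and what remains is Perelman's argument essentially verbatim. The delicate points are bookkeeping: one must choose the reference time in (\ref{Quasi-monotonicity}) to be exactly $t_0+s^2$ so that the weight appearing in the error integral is $(t_0+s^2-t)$ and is therefore dominated by the hypothesized integrable weight $(t_0+\rho^2-t)$; one must run the cut-off/doubling step in the test-function estimate carefully and check that the resulting $\kappa$ genuinely depends only on the four listed quantities (in particular that the lower bound for $\mu(g(0),\cdot)$ is uniform over the relevant $\tau$-range, which is why the hypothesis $T/2\le t_0$ is imposed); and one must invoke the existence on compact $M$ of the minimizer realizing $\mu(g(t_0),s^2)$ that underlies the derivation of (\ref{Quasi-monotonicity}).
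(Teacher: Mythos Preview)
Your proposal is correct and follows essentially the same approach as the paper: both apply the quasi-monotonicity (\ref{Quasi-monotonicity}) with reference time $t_0+s^2$ to bound $\mu(g(t_0),s^2)$ below by $\mu_0-\tfrac18 I_0$ uniformly in $s\le r$ (using $t_0+s^2\in[T/2,T+\rho_0^2]$ for the first term and $s<\rho$ for the error integral), and then run Perelman's cut-off test function together with the doubling/iteration argument exactly as in Section~13 of \cite{KleinerLott}. The only differences are cosmetic---you rescale to $r=1$ while the paper keeps $r$ explicit, and you package the doubling step by citation while the paper writes it out---but the logical content is identical.
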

\begin{proof}
Fix a cut-off function $\chi(s)$ such that $\chi(s)=1$ when $|s|\le\frac{1}{2}$, and $\chi(s)=0$ when $|s|\ge 1$.
For any $g(t_0)$-metric ball $B(p,r)$ of radius $r<\rho$ which satisfies $R(x)\le r^{-2}$ for every $x\in B(p,r)$, we can define \begin{equation}u(x)=e^{L/2}\chi(\frac{d(x,p)}{r}),\end{equation} where $L$ is chosen so that
\begin{equation}(4\pi r^{2})^{-n/2}\int_{M}u^{2}=1.\end{equation}
In particular,
\begin{equation}\mathrm{Vol}(B(p,r))\ge e^{-L}(4\pi)^{n/2}r^{n},\end{equation}
\begin{equation}\mathrm{Vol}(B(p,\frac{r}{2}))\le e^{-L}(4\pi)^{n/2}r^{n}.\end{equation}
By monotonicity of $\mu$, \begin{equation}\begin{split}
\mathcal{W}(g(t_0),u,r^2)&\ge\mu(g(t_0),r^2)\\
&\ge\mu(g(0),t_0+r^2)-\frac{1}{8}\int_{t_0+r^2}^{r^2}\tau\sup_{t=t_0+r^2-\tau} |E|^2\mathrm{d}\tau\\
&=\mu(g(0),t_0+r^2)-\frac{1}{8}\int_{0}^{t_0}(t_0+r^2-t)(\sup_M |E|^2)\mathrm{d}t\\
&\ge\mu_0-\frac{1}{8}\int_{0}^{t_0}(t_0+\rho^2-t)(\sup_M |E|^2)\mathrm{d}t\\
&=\mu_1,
\end{split}\end{equation}
where $\mu_0$ is the lower bound of $\mu(g(0),\tau)$ when $\tau\in[\frac{T}{2},T+\rho_0^2]$.
So \begin{equation}\mu_1\le\int_{M}(4\pi r^2)^{-n/2}[r^2(Ru^2+4|\nabla u|^2)+u^2(-2\ln u-n)].\end{equation}
$R<Cr^{-2}$ in $B(p,r)$, $-2u^2\ln u=-2u^2(L/2+\ln\chi)$ and
\begin{equation}|\nabla u|\le\frac{e^{L/2}}{r}|\chi'(\frac{d(x,p)}{r})|\le\frac{Ce^{L/2}}{r},\end{equation}
So \begin{equation}\mu_1\le C-L+\frac{Ce^{L}\mathrm{Vol} B(p,r)}{r^n}\le C-L+\frac{C\mathrm{Vol}(B(p,r))}{\mathrm{Vol}(B(p,\frac{r}{2}))}.\end{equation}
Thus if $\frac{\mathrm{Vol}(B(p,r))}{\mathrm{Vol}(B(p,\frac{r}{2}))}<3^{n}$, then $-L\ge C$, so $\mathrm{Vol}(B(p,r))\ge C_1r^{n}.$
Let $\kappa=\min(C_1,\frac{\omega}{2})$, then we claim that $\mathrm{Vol}(B(p,r))\ge\kappa r^n$.
Otherwise, $\mathrm{Vol}(B(p,r))<\kappa r^n,$
so $\frac{\mathrm{Vol}(B(p,r))}{\mathrm{Vol}(B(p,\frac{r}{2}))}\ge 3^{n},$
so \begin{equation}\mathrm{Vol}(B(p,\frac{r}{2}))\le 3^{-n}\kappa r^{n}\le\kappa(\frac{r}{2})^{n}.\end{equation}
We can apply the same thing for $\frac{r}{2^k}$ and obtain that
\begin{equation}\mathrm{Vol}(B(p,\frac{r}{2^k}))\le\kappa(\frac{r}{2^k})^{n},\end{equation}
which is a contradiction.
\end{proof}
\section{Finite time singularity}

Now we are ready to study the finite time singularities of reasonable flows of G$_2$ structures.

First of all, using the method of Lotay-Wei and our Shi-type estimate, we can prove the following theorem:
\begin{theorem}
If $\phi(t)$ is a solution to a reasonable flow of G$_2$ structures on a compact manifold $M^7$ in a finite maximal time interval $[0,T)$, then
\begin{equation}\sup_M(|\mathrm{Rm}|^2+|\mathbf{T}|^4+|\nabla \mathbf{T}|^2)^{\frac{1}{2}}\ge \frac{C}{T-t}\end{equation}
for some constant $C>0$.
\label{Riemannian-growth}
\end{theorem}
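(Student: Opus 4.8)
The plan is to run the standard finite-time blow-up argument, with the Shi-type estimate of the previous section as the main input. Assume $T<\infty$ is the maximal existence time, and set $G(t)=\sup_M(|\mathrm{Rm}|^2+|\mathbf{T}|^4+|\nabla\mathbf{T}|^2)$ and $F(t)=G(t)+1$. The first observation I would make is that the pointwise inequality (\ref{Evolution-of-Riemannian}) is obtained purely by algebraic interpolation of the evolution terms, so it holds on all of $M\times[0,T)$ with a constant $C_0$ depending only on the (fixed) coefficients of the flow, with \emph{no} a priori bound on $|\mathrm{Rm}|+|\mathbf{T}|^2+|\nabla\mathbf{T}|$ required. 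Since $M$ is compact, the parabolic maximum principle then converts (\ref{Evolution-of-Riemannian}) into $\tfrac{d}{dt}F\le C_0F^{3/2}$ in the sense of forward difference quotients, and integrating $\tfrac{d}{dt}(F^{-1/2})\ge-\tfrac{C_0}{2}$ gives
\[
F(t)^{-1/2}\ \ge\ F(t_0)^{-1/2}-\tfrac{C_0}{2}(t-t_0),\qquad 0\le t_0\le t<T .
\]

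The next step is a dichotomy, argued by contradiction: suppose $F(t_0)^{-1/2}>\tfrac{C_0}{2}(T-t_0)$ for some $t_0<T$. Letting $t\uparrow T$ in the displayed inequality gives $F(t)^{-1/2}\ge F(t_0)^{-1/2}-\tfrac{C_0}{2}(T-t_0)>0$ on $[t_0,T)$, so $|\mathrm{Rm}|+|\mathbf{T}|^2+|\nabla\mathbf{T}|$ is uniformly bounded on $M\times[t_0,T)$. Feeding this into the Shi-type estimate (applied on the compact $M$ with a ball of radius larger than $\mathrm{diam}(M,g(0))$, so the conclusion is global) would bound $|\nabla^k\mathrm{Rm}|+|\nabla^{k+1}\mathbf{T}|$ uniformly on $M\times[t_1,T)$ for every $k$, for some $t_1\in(t_0,T)$. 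Moreover $|\partial_t g|_{g(t)}=2|h|\le C(1+|\mathrm{Rm}|+|\mathbf{T}|^2)$ and $|\partial_t\phi|_{g(t)}\le C(|h|+|X|)\le C(1+|\mathrm{Rm}|+|\mathbf{T}|^2+|\nabla\mathbf{T}|)$ by (\ref{Definition-1})--(\ref{Definition-3}), both bounded, so on the finite interval $[t_0,T)$ the metrics $g(t)$ stay uniformly equivalent to $g(0)$ and $g(t),\phi(t)$ are uniformly Cauchy in $C^0$; with the higher derivative bounds and interpolation this forces $g(t)\to g(T)$ and $\phi(t)\to\phi(T)$ in $C^\infty$ as $t\to T$, where $\phi(T)$ is a smooth $3$-form inducing the positive definite metric $g(T)$ — hence a genuine G$_2$ structure, which inherits any closedness or co-closedness condition of $\phi(t)$ by passing to the limit. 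Then the short-time existence clause in the definition of a reasonable flow lets one solve the flow from $\phi(T)$ on $[T,T+\varepsilon)$, and the uniqueness clause glues this with $\phi|_{[0,T)}$ to a solution on $[0,T+\varepsilon)$, contradicting the maximality of $T$. Hence $F(t_0)\ge\tfrac{4}{C_0^2(T-t_0)^2}$ for every $t_0\in[0,T)$.

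To finish, I would convert this bound to one for $G$ using $\sqrt{G}=\sqrt{F-1}\ge\sqrt F-1$, which yields $\sqrt{G(t_0)}\ge\tfrac{1/C_0}{T-t_0}$ whenever $T-t_0\le 1/C_0$; for $t_0$ in the complementary (compact) range I would use that $\sup_M G>0$ there — otherwise $g(t_0)$ would be flat and $\phi(t_0)$ torsion-free, a stationary point of the flows under consideration, forcing $T=\infty$ — to get a uniform positive lower bound and then shrink the constant, producing the asserted inequality with a suitable $C>0$.

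I do not expect a deep obstacle, since the genuine analytic work is already done in the Shi-type estimate. The two points needing real care are (i) verifying that (\ref{Evolution-of-Riemannian}) holds with a constant independent of the geometry, so the ODE comparison can be run before any curvature bound is available, and (ii) extracting a $C^\infty$ limit $\phi(T)$ that is an honest nondegenerate G$_2$ structure, so that short-time existence applies; a smaller bookkeeping point is upgrading the estimate on $F$ to the claimed bound on all of $[0,T)$ rather than only for $t$ near $T$.
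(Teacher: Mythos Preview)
Your proposal is correct and follows essentially the same approach as the paper: use the evolution inequality (\ref{Evolution-of-Riemannian}) to obtain an ODE $\tfrac{d}{dt}F\le C_0F^{3/2}$ for $F=\sup_M(|\mathrm{Rm}|^2+|\mathbf{T}|^4+|\nabla\mathbf{T}|^2)+1$, and combine this with the Shi-type estimate to show that a failure of the blow-up rate would allow a smooth extension past $T$, contradicting maximality. The paper's own proof is extremely terse (it simply cites Lotay--Wei's argument and points to (\ref{Evolution-of-Riemannian})), so your write-up in fact fills in the details the paper omits; the only place you go slightly beyond what is needed is the final paragraph extending the bound to all of $[0,T)$, where the claim that a flat, torsion-free structure is necessarily stationary need not hold for a general ``reasonable flow'' (the structural equations (\ref{Definition-2})--(\ref{Definition-4}) allow constant terms), but this is inessential since the estimate near $T$ is the substance of the theorem.
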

\begin{proof}
As Lotay-Wei did in \cite{LotayWei}, if $\sup_M(|\mathrm{Rm}|^2+|\mathbf{T}|^4+|\nabla \mathbf{T}|^2)^{\frac{1}{2}}$ is bounded, then all the higher order derivatives are also bounded. So $\frac{\partial}{\partial t}g$ and $\frac{\partial}{\partial t}\phi$ are all bounded. So they and their higher order derivatives are all bounded using the background metric $g(0)$. So we can take the smooth limit. This will violate the short-time existence assumption.

Still as Lotay-Wei, we can use the equation (\ref{Evolution-of-Riemannian})
to get the required blow-up rate.
\end{proof}

Then we can get the following estimate:
\begin{theorem}
If $\phi(t)$ is a solution to a reasonable flow of G$_2$ structures on a compact manifold $M^7$ in a finite maximal time interval $[0,T)$, then
\begin{equation}\int_0^T\sup_M(|\mathrm{Ric}|+|\mathbf{T}|^2)\mathrm{d}t=\infty.\end{equation}
\end{theorem}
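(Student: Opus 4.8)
The plan is to argue by contradiction: assume $\int_0^T\sup_M(|\mathrm{Ric}|+|\mathbf{T}|^2)\,\mathrm{d}t=:\Lambda_0<\infty$ and show the flow extends past $T$, contradicting maximality. The first step controls the metrics. By (\ref{Definition-2}) we have $|\tfrac{\partial}{\partial t}g_{ij}|_{g(t)}\le C(\Lambda)(|\mathrm{Ric}|+|\mathbf{T}|^2+1)$, hence $|\tfrac{\partial}{\partial t}\log g(t)|\le C(\Lambda)(\sup_M(|\mathrm{Ric}|+|\mathbf{T}|^2)+1)=:\lambda(t)$ with $\int_0^T\lambda<\infty$; integrating gives $e^{-C}g(0)\le g(t)\le e^{C}g(0)$ on $[0,T)$, so every $g(t)$ is uniformly quasi-isometric to the fixed background metric $g(0)$. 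In particular $\phi(t),\psi(t)$ converge in $C^0(g(0))$ as $t\to T$, and $M$ carries a uniform-in-$t$ Sobolev inequality and a uniform lower volume bound on small balls --- so, with this hypothesis in force, non-collapsing is automatic and requires no further assumption on $\mathbf{T}$.

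The core step is to promote the $L^1_tL^\infty_x$ bound $\Lambda_0<\infty$ to a pointwise bound on $\mathcal K:=|\mathrm{Rm}|^2+|\mathbf{T}|^4+|\nabla\mathbf{T}|^2+1$ over $[0,T)$. I would run the strategy of B. Wang, as adapted to this setting in the spirit of Lotay--Wei: using the G$_2$-Bianchi identity $\mathrm{Ric}=L(\nabla\mathbf{T})+\mathbf{T}*\mathbf{T}$ one regroups the reaction terms in (\ref{Evolution-of-Riemannian}) so that the lower-order part carries a factor of $|\mathrm{Ric}|+|\mathbf{T}|^2$, and then a parabolic De Giorgi--Nash--Moser iteration --- with the uniform Sobolev inequality from Step~1 playing the role of Perelman non-collapsing --- combines $\Lambda_0<\infty$ with (\ref{Evolution-of-Riemannian}) and its higher-derivative analogues from the proof of the Shi-type estimate of Section~2 to yield $\sup_{M\times[0,T)}\mathcal K<\infty$. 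Once $|\mathrm{Rm}|+|\mathbf{T}|^2+|\nabla\mathbf{T}|$ is bounded on $[0,T)$, the Shi-type estimate of Section~2 bounds all $|\nabla^k\mathrm{Rm}|+|\nabla^{k+1}\mathbf{T}|$, so $\tfrac{\partial}{\partial t}g$ and $\tfrac{\partial}{\partial t}\phi$ are bounded in every $C^m(g(0))$; hence $g(t),\phi(t)$ extend smoothly to $t=T$ and short-time existence restarts the flow past $T$, a contradiction. Equivalently, $\sup_M\mathcal K<\infty$ already contradicts Theorem~\ref{Riemannian-growth}.

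The main obstacle is exactly this promotion step. The reaction term in (\ref{Evolution-of-Riemannian}) contains a genuine cubic curvature contribution $\sim|\mathrm{Rm}|^3$ which has no $\mathrm{Ric}$ or $\mathbf{T}$ factor to spare, so a naive maximum-principle/ODE comparison against $\int_0^t\sup_M(|\mathrm{Ric}|+|\mathbf{T}|^2)$ fails --- it would only reprove $\int_0^T\sup_M(|\mathrm{Rm}|^2+|\mathbf{T}|^4+|\nabla\mathbf{T}|^2)^{1/2}\,\mathrm{d}t=\infty$, which is strictly weaker than the claim. Everything else --- metric equivalence, the evolution inequalities, and the final bootstrap --- is routine; the delicate point is keeping the Sobolev constant uniform along the flow, which is precisely what the quasi-isometry to $g(0)$ secures.
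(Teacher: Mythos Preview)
Your first step---using (\ref{Definition-2}) to show that $\int_0^T\sup_M(|\mathrm{Ric}|+|\mathbf{T}|^2)\,\mathrm{d}t<\infty$ forces all $g(t)$ to be uniformly quasi-isometric to $g(0)$, and hence that volume non-collapsing on small scales is automatic---is correct and is exactly what the paper uses (this is what ``the metric is uniformly continuous'' means there). The final bootstrap via the Shi-type estimate of Section~2, once $|\mathrm{Rm}|+|\mathbf{T}|^2+|\nabla\mathbf{T}|$ is bounded, is also fine.

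The gap is precisely where you flag it: the ``promotion step'' from $\Lambda_0<\infty$ to $\sup_{M\times[0,T)}\mathcal K<\infty$. You propose to regroup the reaction terms via the G$_2$--Bianchi identity and then run a De Giorgi--Nash--Moser iteration in the style of Wang, but this does not go through. The evolution (\ref{Evolution-of-Riemannian}) contains the genuine Ricci-flow contribution $\mathrm{Rm}*\mathrm{Rm}$ inside $(\partial_t-\Delta)\mathrm{Rm}$, producing a $|\mathrm{Rm}|^3$ term in $(\partial_t-\Delta)|\mathrm{Rm}|^2$ that carries no factor of $|\mathrm{Ric}|$ or $|\mathbf{T}|^2$; the Bianchi-type identity $\mathrm{Ric}=L(\nabla\mathbf{T})+\mathbf{T}*\mathbf{T}$ only relates $\mathrm{Ric}$ to torsion and does nothing to this Weyl-type cubic. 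Moser iteration on a scalar inequality $(\partial_t-\Delta)u\le Cu^{3/2}$ needs the superlinear reaction to carry a coefficient you can integrate in time, and here it does not. Wang's argument in \cite{Wang} is not a template either: it starts from a \emph{pointwise} bound on $R$, extracts a space-time $L^2$ bound on $\mathrm{Ric}$ from $(\partial_t-\Delta)R=2|\mathrm{Ric}|^2+\cdots$, and then uses parabolic regularity \emph{after} rescaling to a bounded-curvature regime---none of which is available from your $L^1_tL^\infty_x$ hypothesis alone. So as written, the core step is an assertion rather than an argument.

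The paper avoids this entirely: it does not attempt any direct iteration. Instead it feeds the uniform metric equivalence (your Step~1) into the \emph{proof} of Theorem~8.1 of Lotay--Wei, which is a blow-up/compactness argument. The point is that once $e^{-C}g(0)\le g(t)\le e^{C}g(0)$, balls at every time have a uniform Euclidean volume lower bound on small scales (exactly your non-collapsing observation), so one can point-pick where $\mathcal K$ is large, rescale, apply the local Shi-type estimate of Section~2, and pass to a smooth pointed limit; the contradiction is then read off from the limit rather than from an evolution inequality on $M$. If you want to complete your write-up, replace the Moser-iteration paragraph by this compactness route.
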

\begin{proof}
If it is not true, then using the evolution equation, we can see that $\frac{\partial}{\partial t}\phi$ is bounded.
So the metric is uniformly continuous. Using the proof of Theorem 8.1 of \cite{LotayWei} as well as the Shi-type estimate, we can get a contradiction.
\end{proof}

As for the better estimates of Ricci curvature, scalar curvature and torsion tensor, we can prove the following theorem using the method in \cite{Wang}
\begin{theorem}
Let $\phi(t)$ be a solution to a reasonable flow of G$_2$ structures on a compact manifold $M^7$ in a finite maximal time interval $[0,T)$. Assume that
\begin{equation}\int_{0}^{T}(T-t)\sup_M |\mathbf{T}|^4\mathrm{d}t<\infty,\end{equation}
then \begin{equation}\limsup_{t\rightarrow T}(T-t)\sup_M(|\mathrm{Ric}|+|\mathbf{T}|^2)>0,\end{equation}
and \begin{equation}\limsup_{t_0\rightarrow T}[(T-t_0)^2\sup_{t\le t_0}(1+|R|+|\mathbf{T}|^2)\sup_{t\le t_0}(|\mathrm{Rm}|+|\mathbf{T}|^2+|\nabla\mathbf{T}|)]>0.\end{equation}
\end{theorem}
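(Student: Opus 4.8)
The plan is to argue by contradiction, adapting Wang's method \cite{Wang}, and to treat both assertions together. Suppose the first fails, so $(T-t)\sup_M(|\mathrm{Ric}|+|\mathbf{T}|^2)\to 0$ as $t\to T$; or suppose the second fails, so, writing $A(t_0)=\sup_{t\le t_0}(1+|R|+|\mathbf{T}|^2)$ and $B(t_0)=\sup_{t\le t_0}(|\mathrm{Rm}|+|\mathbf{T}|^2+|\nabla\mathbf{T}|)$, we have $(T-t_0)^2A(t_0)B(t_0)\to 0$. By Theorem \ref{Riemannian-growth}, $B(t_0)\ge C/(T-t_0)\to\infty$; in the second case $A(t_0)B(t_0)=o((T-t_0)^{-2})$ together with $B(t_0)\ge C/(T-t_0)$ forces $\sup_{t\le t_0}\sup_M|\mathbf{T}|^2=o(B(t_0))$, while in the first case the analogous ratio at the curvature scale tends to $0$ because $\lambda_j(T-s_j)\ge C$ below, again by Theorem \ref{Riemannian-growth}. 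By a standard point-selection argument choose $s_j\to T$ and $p_j\in M$ with $(|\mathrm{Rm}|^2+|\mathbf{T}|^4+|\nabla\mathbf{T}|^2)^{1/2}(p_j,s_j)=\lambda_j:=\sup_{[0,s_j]\times M}(|\mathrm{Rm}|^2+|\mathbf{T}|^4+|\nabla\mathbf{T}|^2)^{1/2}\to\infty$, and form the parabolic rescaling $\bar g_j(t)=\lambda_j\,g(s_j+t/\lambda_j)$, again a reasonable flow, defined on $[-\lambda_j s_j,\lambda_j(T-s_j))$, with $|\mathrm{Rm}|+|\mathbf{T}|^2+|\nabla\mathbf{T}|\le C$ everywhere for $t\le 0$; the Shi-type estimate of Section 2 then bounds all $|\nabla^k\mathrm{Rm}|+|\nabla^{k+1}\mathbf{T}|$ on parabolic balls around $(p_j,0)$.

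Next, the non-collapsing. By (\ref{Definition-2}), $|E|^2\le C(1+|\mathbf{T}|^4)$. Take $\rho_j^2=c\,\lambda_j^{-1}$; since $\lambda_j^{-1}\le C^{-1}(T-s_j)$ one can arrange $s_j+\rho_j^2\le T$, hence $s_j+\rho_j^2-t\le T-t$ on $[0,s_j]$, so that $\int_0^{s_j}(s_j+\rho_j^2-t)\sup_M|E|^2\,dt\le CT^2+C\int_0^T(T-t)\sup_M|\mathbf{T}|^4\,dt$, which is finite and independent of $j$ by hypothesis. Theorem \ref{main-theorem} then provides a single $\kappa>0$ with $g(s_j)$ being $\kappa$-non-collapsed relative to an upper scalar curvature bound on scale $\rho_j$; since $|R|\le C_n|\mathrm{Rm}|\le C_n\lambda_j$ (and $\le C_n$ in the rescaled metric for $t\le0$), testing on a ball of radius $\asymp\lambda_j^{-1/2}$ gives a uniform lower bound on $\mathrm{Vol}_{\bar g_j(0)}$ of unit balls around $p_j$, hence on the injectivity radius there. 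By Hamilton's compactness theorem a subsequence of $(M,\bar g_j(t),p_j)$ converges, in the pointed $C^\infty$-Cheeger–Gromov sense, to a complete ancient solution $(N^7,g_\infty(t),p_\infty)$, $t\le 0$, of a reasonable flow with bounded curvature. The first paragraph gives $\sup_M|\mathbf{T}|^2_{\bar g_j}\to 0$ (and $\sup_M|\mathrm{Ric}|_{\bar g_j}\to 0$ in the first case) on compact time intervals, so $\mathbf{T}_\infty\equiv 0$: the limit is a torsion-free G$_2$ structure, hence Ricci-flat with holonomy contained in G$_2$, and the flow is static. It is non-flat, since $(|\mathrm{Rm}|^2+|\mathbf{T}|^4+|\nabla\mathbf{T}|^2)^{1/2}_{\bar g_j}(p_j,0)=1$ and $\mathbf{T}_\infty=0$ force $|\mathrm{Rm}|_{g_\infty}(p_\infty,0)=1$. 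Because $R_{g_\infty}\equiv 0$ and $g_\infty$ inherits a uniform lower bound for $\mu$ from the monotonicity, running the argument of Section 4 with $E\equiv 0$ also shows $N$ has maximal volume growth, recovering the blow-up statement advertised in the introduction.

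The contradiction comes from the monotonicity of $\mu$. Since $|E|^2\le C(1+|\mathbf{T}|^4)$ and $\int_0^T(T-t)\sup_M|\mathbf{T}|^4\,dt<\infty$, the error term $\tfrac18\int_{\tau_1}^{\tau_2}\tau\sup_{t=T-\tau}|E|^2\,d\tau$ in (\ref{Quasi-monotonicity}) has a convergent tail as $\tau_1\to 0$; with the lower bound supplied by the non-collapsing this forces $\mu(g(t),T-t)$ to converge to a finite limit as $t\to T$. Transporting this through the parabolic rescaling — cleanest when the singularity is of type I, so that one rescales by $(T-s_j)^{-1}$ and the soliton parameter $\tau=1-t$ stays in $(0,\infty)$, and in general requiring the maximal-curvature rescaling to be combined with a careful analysis of $\mu(g(s_j),\cdot)$ — one obtains that the monotone quantity is constant along $g_\infty$; hence equality holds in the $\mathcal{W}$-variation formula of Section 3 and $g_\infty$ is a gradient shrinking soliton $R_{ij}+\nabla_i\nabla_j f=\tfrac{1}{2\tau}g_{ij}$. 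With $R_{ij}\equiv 0$ this reduces to $\nabla^2 f=\tfrac{1}{2\tau}g$ with $\tfrac1{2\tau}\ne 0$, and by Tashiro's theorem a complete manifold carrying such a function is isometric to flat $\mathbb{R}^7$, contradicting $|\mathrm{Rm}|_{g_\infty}(p_\infty,0)=1$. This contradiction proves both assertions.

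I expect the decisive difficulty to be exactly this last step: passing rigorously from the quasi-monotonicity of $\mu$ along the flow to the rigid equality on a complete, possibly non-compact, blow-up limit — in particular choosing the rescaling so that the limiting soliton parameter is finite and positive (which is transparent for type-I singularities but needs extra bookkeeping in general), and handling the existence and Cheeger–Gromov convergence of $\mathcal{W}$-minimizers on $N$. A secondary technical point is the uniformity of $\kappa$, which depends on taking $\rho_j\asymp\lambda_j^{-1/2}\lesssim(T-s_j)^{1/2}$ so that the weighted $|E|^2$-integral is dominated, uniformly in $j$, by the hypothesis $\int_0^T(T-t)\sup_M|\mathbf{T}|^4\,dt<\infty$.
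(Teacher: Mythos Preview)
Your overall strategy --- blow up at the curvature scale, use the $\kappa$-non-collapsing of Theorem~\ref{main-theorem} to extract a torsion-free (hence Ricci-flat) limit, then derive a contradiction --- is close in spirit to what the paper invokes for the \emph{first} assertion, where it simply cites Wang's method together with the Shi-type estimate and says no more. Your handling of the $\kappa$-non-collapsing (choosing $\rho_j\asymp\lambda_j^{-1/2}$ so that the weighted $|E|^2$-integral is dominated by the hypothesis) is correct and matches the paper.

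For the \emph{second} assertion, however, the paper does something genuinely different and more robust. It does \emph{not} close the argument by $\mu$-rigidity on a blow-up limit. Instead it proves a direct quantitative inequality: after rescaling so that $Q(t_0)=\sup_{t\le t_0}(|\mathrm{Rm}|+|\mathbf{T}|^2+|\nabla\mathbf{T}|)=1$, the $\kappa$-non-collapsing gives a harmonic-radius lower bound; parabolic regularity for $\mathbf{T}$ then yields $|\nabla^k\mathbf{T}|\le C(k)\sup|\mathbf{T}|$; integrating the evolution equation for $R$ (whose $(\partial_t-\Delta)R-2|\mathrm{Ric}|^2$ part contains no $\mathrm{Ric}*\mathrm{Ric}$ term) against a space-time cutoff gives $\int|\mathrm{Ric}|^2\le C\,O(t_0)/Q(t_0)$ on a unit parabolic cylinder; and parabolic regularity for $\mathrm{Ric}$ upgrades this to the pointwise bound $|\mathrm{Ric}|^2\le C\,O(t_0)/Q(t_0)$. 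Undoing the rescaling, $|\mathrm{Ric}|+|\mathbf{T}|^2\le C\sqrt{O(t_0)Q(t_0)}$, and then the second assertion follows from the first: if $(T-t_0)^2 O(t_0)Q(t_0)\to 0$ one gets $(T-t_0)(|\mathrm{Ric}|+|\mathbf{T}|^2)\to 0$, a contradiction. No passage of $\mu$ to a non-compact limit is needed.

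The gap you yourself flag in the $\mu$-rigidity step is real, not merely technical, and it affects both assertions in your scheme. Rescaling by $\lambda_j$ makes the relevant soliton parameter $\lambda_j(T-s_j)$, which under the sole information $\lambda_j(T-s_j)\ge C$ may tend to infinity; there is then no finite $\tau$ on the limit at which the equality case of the $\mathcal{W}$-variation produces a shrinking soliton equation, so Tashiro's theorem never enters. Rescaling instead by $(T-s_j)^{-1}$ keeps $\tau$ finite but sacrifices the curvature bound unless the singularity is type~I, so Cheeger--Gromov compactness is unavailable. Since a non-flat, Ricci-flat, $\kappa$-non-collapsed $\mathrm{G}_2$ manifold is not in itself a contradiction, your argument as written does not close. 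The paper's parabolic-estimate route sidesteps exactly this issue.
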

\begin{proof}
In this case, the flow is $\kappa$-non-collapsing on the scale $\sqrt{T-t}$. Using Shi-type estimate and the method of Wang in \cite{Wang}, it is easy to see that $\limsup_{t\rightarrow T}(T-t)\sup_M|\frac{\partial}{\partial t}g|>0.$
So the first estimate is immediate.

As for the second estimate, we need to show that when $T-t_0<1$,
\begin{equation}\sup_{t\le t_0}(|\mathrm{Ric}|+|\mathbf{T}|^2)\le C\sqrt{O(t_0)Q(t_0)},\end{equation}
where \begin{equation}Q(t_0)=\sup_{t\le t_0}(|\mathrm{Rm}|+|\mathbf{T}|^2+|\nabla\mathbf{T}|),\end{equation}
and
\begin{equation}O(t_0)=\sup_{t\le t_0}(1+|R|+|\mathbf{T}|^2)\le 1+100 Q(t_0).\end{equation}
We will still follow the method in \cite{Wang}.
By Theorem \ref{Riemannian-growth}, we see that $Q(t_0)\ge C(T-t)^{-1}$. So the flow is $\kappa$-non-collapsing on the scale $Q(t_0)^{-\frac{1}{2}}$. Now we re-scale the flow so that $Q(t_0)=1$. Then the harmonic radius has a lower bound. So inside a finite size of ball, the metric and all of its higher derivatives are uniformly bounded.
So $(\frac{\partial}{\partial t}-D)\mathbf{T}=0$ for some elliptic operator $D$ with bounded coefficients and higher derivatives of coefficients.
So after re-scaling, \begin{equation}\sup|\nabla^k\mathbf{T}|\le C(k)\sup|\mathbf{T}|.\end{equation}
Now
\begin{equation}\begin{split}
\frac{\partial}{\partial t}R=&-2\Delta\mathrm{Tr}h+2\mathrm{div(div}h)-2<h,\mathrm{Ric}>\\
=&2\Delta R-2\mathrm{div(div Ric)}+2|\mathrm{Ric}|^2+\frac{L(\mathrm{Ric})}{Q(t_0)}+\frac{\mathbf{T}*\mathrm{Ric}}{\sqrt{Q(t_0)}}+\mathbf{T}*\mathbf{T}*\mathrm{Ric}\\
&+L(\nabla^2(\frac{C}{Q(t_0)}+\frac{\mathbf{T}}{\sqrt{Q(t_0)}}+\mathbf{T}*\mathbf{T})).\end{split}\end{equation}
The terms $2\Delta R-2\mathrm{div(div Ric)}+2|\mathrm{Ric}|^2$ are equal to $\Delta R+2|\mathrm{Ric}|^2$ by Bianchi identity. The terms $L(\mathrm{Ric})+\mathbf{T}*\mathrm{Ric}+\mathbf{T}*\mathbf{T}*\mathrm{Ric}$ are bounded by $\frac{CO(t_0)}{Q(t_0)}|\mathrm{Ric}|\le |\mathrm{Ric}|^2+C\frac{O(t_0)}{Q(t_0)}$.
The rest terms are bounded by $C\frac{O(t_0)}{Q(t_0)}$.

For any $p$, we can pick a cut-off function $\chi$ such that it is 0 outside \begin{equation}B_{g(t_0)}(p,Q(t_0)^{-1/2})\times[t_0-\frac{1}{Q(t_0)},t_0],\end{equation}
and is 1 inside \begin{equation}B_{g(t_0)}(p,\frac{1}{2}Q(t_0)^{-1/2})\times[t_0-\frac{1}{2Q(t_0)},t_0].\end{equation}
After re-scaling, it vanishes outside $B_{g(0)}(p,1)\times[-1,0]$ and is 1 inside $B_{g(0)}(p,\frac{1}{2})\times[-\frac{1}{2},0]$.

Thus \begin{equation}\begin{split}
\int_{t=0} \chi R&=\int_{-1}^{0}(\int_M\frac{\partial}{\partial t}(\chi R))\mathrm{d}t\\
&=\int_{-1}^{0}\int_M[R(\frac{\partial}{\partial t}-\Delta)\chi+\chi(\frac{\partial}{\partial t}-\Delta)R]\mathrm{d}t.
\end{split}\end{equation}
Since the geometry is bounded, it is easy to see that
\begin{equation}\int_{B_{g(0)}(p,\frac{1}{2})\times[-\frac{1}{2},0]}|\mathrm{Ric}|^2\le C\frac{O(t_0)}{Q(t_0)}.\end{equation}
Now the Ricci curvature satisfies the equation
\begin{equation}|(\frac{\partial}{\partial t}-D)\mathrm{Ric}|\le C\frac{O(t_0)}{Q(t_0)}\end{equation}
for some elliptic operator $D$ with bounded coefficients and higher derivatives of coefficients.
Therefore, we have $|\mathrm{Ric}|^2\le C\frac{O(t_0)}{Q(t_0)}$.
Before re-scaling, it is exactly $|\mathrm{Ric}|\le C\sqrt{O(t_0)Q(t_0)}$.
\end{proof}

If in addition \begin{equation}\sup_M(|R|+|\mathbf{T}|^2)=o(\frac{1}{T-t}),\end{equation} we can also show that any blow-up limit at finite time must be a manifold with maximal volume growth rate whose holonomy is contained in G$_2$.

\begin{theorem}
Let $\phi(t)$ be a solution to a reasonable flow of G$_2$ structures on a compact manifold $M^7$ in a finite maximal time interval $[0,T)$. If
\begin{equation}\int_{0}^{T}(T-t)\sup_M |\mathbf{T}|^4\mathrm{d}t<\infty,\end{equation}
and
\begin{equation}\sup_M(|R|+|\mathbf{T}|^2)=o(\frac{1}{T-t}),\end{equation}
then there exists a sequence $t_k\rightarrow T, p_k\in M$ such that \begin{equation}Q_k=(|\mathrm{Rm}|^2+|\mathbf{T}|^4+|\nabla \mathbf{T}|^2)^{\frac{1}{2}}(p_k,t_k)\rightarrow\infty,\end{equation} and $(M,Q_k^{3/2}\phi(t_k),Q_kg(t_k),p_k)$ converges to a complete manifold $M_\infty$ with a torsion-free G$_2$ structure $(\phi_\infty,g_\infty,p_\infty)$ such that
\begin{equation}\mathrm{Vol}_{g_\infty}(B_{g_\infty}(p_\infty,r))\ge\kappa r^7\end{equation}
 for some $\kappa>0$ and all $r>0$.
\end{theorem}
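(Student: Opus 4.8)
The plan is to run a blow-up (Cheeger--Gromov) analysis at the singular time, using the Shi-type estimate of Section~2 for local regularity, Theorem~\ref{main-theorem} for the injectivity radius, and the hypothesis $\sup_M(|R|+|\mathbf T|^2)=o((T-t)^{-1})$ to identify the limit as torsion-free with maximal volume growth.

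\emph{Point selection.} Write $Q(t)=\sup_{s\le t}\sup_M(|\mathrm{Rm}|+|\mathbf T|^2+|\nabla\mathbf T|)$ and $O(t)=\sup_{s\le t}\sup_M(1+|R|+|\mathbf T|^2)$; the hypothesis implies $O(t)=o((T-t)^{-1})$. In the estimate $\limsup_{t\to T}(T-t)^2O(t)Q(t)>0$ established above, factoring $(T-t)^2O(t)Q(t)=[(T-t)O(t)]\cdot[(T-t)Q(t)]$ with first factor $o(1)$ forces $\limsup_{t\to T}(T-t)Q(t)=\infty$, i.e. the singularity is strictly slower than type~I. I would then pick $t_k'\to T$ with $(T-t_k')Q(t_k')\to\infty$ and, inside $M\times[0,t_k']$, pick $(p_k,t_k)$ achieving $Q_k:=\sup_{M\times[0,t_k']}(|\mathrm{Rm}|^2+|\mathbf T|^4+|\nabla\mathbf T|^2)^{1/2}$; this value is comparable to $Q(t_k')$, so $Q_k\to\infty$, $Q_k(T-t_k)\ge Q_k(T-t_k')\to\infty$, and $Q_k\gtrsim(T-t_k)^{-1}$ by Theorem~\ref{Riemannian-growth}, while the parabolic bound $(|\mathrm{Rm}|^2+|\mathbf T|^4+|\nabla\mathbf T|^2)^{1/2}\le Q_k$ holds on all of $M\times[0,t_k]$ since every $(x,t)$ with $t\le t_k$ lies in $M\times[0,t_k']$.

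\emph{Rescaling, regularity, non-collapsing.} Set $g_k(t)=Q_k\,g(t_k+t/Q_k)$ and $\phi_k(t)=Q_k^{3/2}\phi(t_k+t/Q_k)$; this is again a reasonable flow, and since parabolic rescaling only multiplies the lower-order structural terms and the quantity $|\mathrm{Rm}_{g_k}|+|\mathbf T_{g_k}|^2+|\nabla\mathbf T_{g_k}|$ by non-positive powers of $Q_k$, all of these stay bounded by a fixed constant on the backward region $M\times[-Q_kt_k,0]$, whose time-length $Q_kt_k$ tends to infinity. Hence on each fixed $B_{g_k(0)}(p_k,A)\times[-A,0]$ the Shi-type estimate of Section~2 applies for $k$ large and yields uniform bounds on $|\nabla^j\mathrm{Rm}_{g_k}|$ and $|\nabla^{j+1}\mathbf T_{g_k}|$ for all $j$. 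For the injectivity radius I would invoke Theorem~\ref{main-theorem} on the original flow with $t_0=t_k$ and $\rho=\sqrt{T-t_k}$: since $|E|\le C(1+|\mathbf T|^2)$, hence $|E|^2\le C(1+|\mathbf T|^4)$, and $t_k+\rho^2=T$, the relevant quantity is $\int_0^{t_k}(T-t)\sup_M|E|^2\,\mathrm dt\le C\int_0^T(T-t)\,\mathrm dt+C\int_0^T(T-t)\sup_M|\mathbf T|^4\,\mathrm dt<\infty$, and is bounded \emph{independently of $k$} by the assumption $\int_0^T(T-t)\sup_M|\mathbf T|^4\,\mathrm dt<\infty$; so there is a $\kappa>0$, independent of $k$, with $g(t_k)$ being $\kappa$-non-collapsing relative to the upper scalar-curvature bound on scale $\sqrt{T-t_k}$. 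By scale covariance, $g_k(0)$ is then $\kappa$-non-collapsing on scale $\sqrt{Q_k(T-t_k)}\to\infty$; moreover $\sup_M R_{g_k(0)}=Q_k^{-1}\sup_M R(\cdot,t_k)\le C^{-1}(T-t_k)\,o((T-t_k)^{-1})\to 0$. Therefore, for every fixed $r>0$, both $r<\sqrt{Q_k(T-t_k)}$ and $\sup R_{g_k(0)}<r^{-2}$ hold for $k$ large, giving $\mathrm{Vol}_{g_k(0)}(B_{g_k(0)}(p_k,r))\ge\kappa r^7$; together with the curvature bounds this yields a uniform $\mathrm{inj}_{g_k(0)}(p_k)\ge i_0>0$.

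\emph{Limit and conclusion.} Cheeger--Gromov compactness now extracts a subsequence with $(M,g_k(0),p_k)\to(M_\infty,g_\infty,p_\infty)$ in $C^\infty_{\mathrm{loc}}$, $M_\infty$ complete; since $\phi_k(0)$ has uniformly bounded covariant derivatives of all orders, it converges to a form $\phi_\infty$, and by continuity of the algebraic relation between a G$_2$ $3$-form and its metric, $\phi_\infty$ is a genuine (non-degenerate) G$_2$ structure inducing $g_\infty$. As $|\mathbf T_{g_k}|^2\le Q_k^{-1}\sup_M|\mathbf T|^2\le C^{-1}(T-t_k)\,o((T-t_k)^{-1})\to0$ uniformly while $\mathbf T_{g_k}\to\mathbf T_{g_\infty}$ in $C^\infty_{\mathrm{loc}}$, the limit torsion vanishes, so $\phi_\infty$ is torsion-free and hence $\mathrm{Hol}(g_\infty)\subseteq\mathrm G_2$ (and $g_\infty$ is Ricci-flat). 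Finally, letting $k\to\infty$ in $\mathrm{Vol}_{g_k(0)}(B_{g_k(0)}(p_k,r))\ge\kappa r^7$ gives $\mathrm{Vol}_{g_\infty}(B_{g_\infty}(p_\infty,r))\ge\kappa r^7$ for every $r>0$. The main obstacle is exactly the joint choice of blow-up points and scales: one must keep the Shi constants uniform, keep the non-collapsing constant uniform — which pins the non-collapsing scale to $\sqrt{T-t_k}$ so that $\int_0^{t_k}(t_k+\rho^2-t)\sup_M|E|^2\,\mathrm dt$ stays controlled — and yet have that scale still dominate the blow-up radius $rQ_k^{-1/2}$ for \emph{every} fixed $r$, which is precisely what the faster-than-type-I lower bound $Q_k(T-t_k)\to\infty$ (coming from the previous theorem together with the $o((T-t)^{-1})$ hypothesis) provides.
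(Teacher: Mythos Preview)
Your argument is correct and follows the same route as the paper's proof: deduce from the previous theorem together with $O(t)=o((T-t)^{-1})$ that the singularity is strictly faster than type~I, select space-time maxima so that the rescaled flows have uniform curvature/torsion bounds on arbitrarily long backward intervals, apply the Shi-type estimate for higher-order control, use Theorem~\ref{main-theorem} at scale $\rho=\sqrt{T-t_k}$ (so that $t_k+\rho^2=T$ and the integral is uniformly bounded by the hypothesis) to get $\kappa$-non-collapsing on scales tending to infinity after rescaling, and pass to a Cheeger--Gromov limit in which the torsion vanishes. Your write-up simply makes explicit several points the paper leaves implicit (the careful point selection guaranteeing $t_k\le t_k'$ and hence a genuine backward bound, the uniformity of $\kappa$, and the non-degeneracy of $\phi_\infty$).
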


\begin{proof}
First of all, we can see that
\begin{equation}\limsup_{t_0\rightarrow T}[(T-t_0)\sup_{t\le t_0}(|\mathrm{Rm}|+|\mathbf{T}|^2+|\nabla\mathbf{T}|)]=\infty.\end{equation}
So we can choose a sequence such that $(T-t_k)Q_k\rightarrow\infty$.
After re-scaling, $(|\mathrm{Rm}|^2+|\mathbf{T}|^4+|\nabla \mathbf{T}|^2)^{\frac{1}{2}}$ is bounded. Moreover, $\sup(|R|+|\mathbf{T}|^2)$ converges to 0, and the manifold is $\kappa$-non-collapsing on a scale going to infinity. In particular, we get a uniform volume lower bound in any finite scale. Therefore, by our Shi-type estimate, the G$_2$ structures converge in $C^\infty$ sense to a limit G$_2$ structure. In the limit, both the scalar curvature and the torsion tensor are everywhere 0. In other words, the limit is torsion-free. Moreover, it has maximal volume growth rate.
\end{proof}

\noindent{\bf Acknowledgement:} The author is grateful to the helpful discussions with Xiuxiong Chen, Jason Lotay and Chengjian Yao.

\end{document}